

 \documentclass[final,1p,times]{elsarticle}

\usepackage{epsfig}
\usepackage{epstopdf}
\usepackage{subfigure}

\usepackage{amssymb}
\usepackage{amsthm}
\usepackage{amsmath}




\newtheorem{theorem}{Theorem}[section] 
\newtheorem{lemma}[theorem]{Lemma}     


\numberwithin{equation}{section}

\journal{Constructive Approximation}

\begin{document}

\begin{frontmatter}


\author{Patrick Arthur Miller} 
\ead{patrickarthurmiller@gmail.com}
\address{Mathematics Department, The Graduate Center -- City University of New York, 365~5th~Avenue, New York, NY 10016}
\address{Department of Mathematics \& Computer Science, Rutgers University, 101 Warren St., Newark, NJ 07102} 

\title{Geometric Multiproducts:  A New Extrapolation Tool}


\author{}

\address{}

\begin{abstract}
We show how to extrapolate an analytic function (or a smooth signal) by multiplying and dividing its values on geometric sequences that collapse to a point.
\end{abstract}

\begin{keyword}
 extrapolation \sep forecasting \sep infinite products \sep geometric sequences \sep geometric sampling \sep Geometric Multiproduct \sep  digital signal processing
\MSC 41A99 \sep 65D15 \sep 65D05
\end{keyword}

\end{frontmatter}


\section{Introduction}

 Digital signal processing normally uses signal samples that are uniformly spaced in time; the sample measurement times form an arithmetic sequence \cite{dsp}.  This article shows how to extrapolate an analytic function (or a smooth signal), by multiplying and dividing function values (or signal samples) at points that form sets of geometric sequences.

The starting point for this work was a trigonometric identity attributed to Euler, written in a particular way:
\begin{equation}
\label{eq:Euler}
\prod_{n=1}^\infty\,\cos\left(\frac{x}{2^n}\right) = \frac{1}{x} \int_0^x\,\cos(s)\,ds.
\end{equation}

Eqation~\ref{eq:Euler} possesses a certain parallelism.  In forming the simple integral on the right, the cosine function is sampled on a dense arithmetic sequence, the values are added, and the sum is normalized.  On the left, the cosine function (already normalized to unity at the origin) is sampled  on a geometric sequence, and the values are multiplied.  The equation also has a predictive capability:  on the right we have the average value of cosine on the interval $[0, x]$, whereas on the left we evaluate cosine only in the \emph{first half} of the interval.  Lastly, since the equation has only one function present, one might wonder whether there is a generalization for other functions.  This prompted us to look for a similar equation, which we hoped would predict the value of a function at one point by multiplying prior geometric values.

The solution we found does indeed multiply function values from geometric sampling.  But the function is evaluated on multiple geometric sequences. And, strengthening the parallelism, the values are multiplied as the geometric sequences become dense by collapsing to a point.

\section{An Extrapolation Tool}

We call our result the Geometric Multiproduct.

We let $x$ represent the point to which we want to extrapolate the function $f$.  To assure convergence of infinite products \cite{infprod}, we require that $f$ be normalized to unity at the origin and that the geometric ratio $(1/r)$ be less than one.
Our result is expressed in the following theorem.
\begin{theorem}
\label{thm:GMP}
Suppose that $f\textrm{:\ }  \mathbb{R}\rightarrow\mathbb{R}$ is analytic and nonzero on the interval $[0, x]$, that $f(0)=1$, and that $r>1$.
Let $\mathcal{S}$ be a set of positive integers, $\mathcal{S} \subset \mathbb{N}^*$.   If we define
\begin{equation}
\label{eq:places}
x_n\left(\mathcal{S},r,x\right) \,=\, \left[ \,\prod_{k\in \mathcal{S}}\,(r^k-1)^{1/k}\right]\cdot\frac{x}{r^n}\,\,\,\,
\end{equation}
\begin{equation}
\label{eq:factors}
and\,\,\,\,\,\,\,\,\,\,\,\mathcal{P}_f\left(\mathcal{S},r,x\right) \,=\,\,\,\,\prod_{n=|\mathcal{S}|}^\infty \,\,\left[\,f\left(x_n(\mathcal{S},r,x\right))\,\right]\,^{^{\textrm{\large{$\binom{n-1}{|\mathcal{S}|-1}$}}}}\,,
\end{equation}
\begin{equation}
\label{eq:powerproduct}
then\,\,\,\,\,\,\,\,\,\,f(x) \,= \,\, \lim_{r\downarrow1}\,\,\frac{\ \ \prod\limits_{{|\mathcal{S}|\, odd}}\,\,\mathcal{P}_f\left(\mathcal{S}, r,x\right)\ } {\,\prod\limits_{{|\mathcal{S}|\, even}}^{\ }\,\mathcal{P}_f\left(\mathcal{S}, r,x\right)}\,.
\end{equation}
\end{theorem}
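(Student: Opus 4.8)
\quad The plan is to take logarithms and reduce everything to the Taylor expansion $\log f(t)=\sum_{j\ge1}a_j t^j$ at the origin (legitimate because $f$ is analytic and, being continuous with $f(0)=1$ and nowhere zero, is positive on $[0,x]$). I will assume this series converges absolutely on $[0,x]$, so that $\log f(x)=\sum_j a_j x^j$; the general case --- $f$ merely analytic on the interval --- should reduce to this by splitting $[0,x]$ into short pieces and chaining the identity, renormalizing to $1$ at each left endpoint. Since every product in the theorem is positive, it suffices to prove
\[
\log f(x)=\lim_{r\downarrow1}\Bigl(\sum_{|\mathcal S|\text{ odd}}\log\mathcal P_f(\mathcal S,r,x)-\sum_{|\mathcal S|\text{ even}}\log\mathcal P_f(\mathcal S,r,x)\Bigr),
\]
with the sum over finite sets $\mathcal S$ read as the limit of its partial sums over $\mathcal S\subseteq\{1,\dots,N\}$ as $N\to\infty$.

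Write $m=|\mathcal S|$. Since $\log(1-r^{-k})<0$ one has $\prod_{k\in\mathcal S}(r^k-1)^{1/k}<r^{m}$, so every sample point $x_n=x_n(\mathcal S,r,x)$ lies in $(0,x)$, the Taylor expansion of $\log f$ applies to each $f(x_n)$, and (interchanging the two sums by absolute convergence, using that the $x_n$ decay geometrically in $n$)
\[
\log\mathcal P_f(\mathcal S,r,x)=\sum_{j\ge1}a_j\Bigl[\prod_{k\in\mathcal S}(r^k-1)^{j/k}\Bigr]x^j\sum_{n\ge m}\binom{n-1}{m-1}r^{-nj}.
\]
Invoking the negative-binomial generating function $\sum_{n\ge m}\binom{n-1}{m-1}z^n=z^m/(1-z)^m$ with $z=r^{-j}$ collapses the inner sum to $(r^j-1)^{-m}$, and since $m=|\mathcal S|$ this gives the closed form
\[
\log\mathcal P_f(\mathcal S,r,x)=\sum_{j\ge1}a_j x^j\prod_{k\in\mathcal S}u_k^{(j)},\qquad u_k^{(j)}:=\frac{(r^k-1)^{j/k}}{r^j-1}.
\]
This is the step where the exponent $\binom{n-1}{m-1}$, the starting index $n=m$, and the normalizing constant $\prod_{k\in\mathcal S}(r^k-1)^{1/k}$ are precisely tuned to cooperate.

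Now sum over $\emptyset\ne\mathcal S\subseteq\{1,\dots,N\}$ against $(-1)^{|\mathcal S|+1}$; this finite sum commutes with $\sum_j$, and the alternating sum of products telescopes to $1-\prod_{k=1}^N(1-u_k^{(j)})$. The crucial observation is $u_j^{(j)}=(r^j-1)^{1}/(r^j-1)=1$, so the factor $k=j$ annihilates $\prod_{k=1}^N(1-u_k^{(j)})$ whenever $j\le N$: the coefficient of $a_j x^j$ is then exactly $1$ for every such $j$, and the $N$-th partial sum equals $\log f(x)-E_N(r)$ with $E_N(r)=\sum_{j>N}a_j x^j\prod_{k=1}^N(1-u_k^{(j)})$.

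It remains to show $E_N(r)\to0$, and this is where the real work is. The key lemma is the one-variable estimate $0\le u_k^{(j)}\le1$ for $1\le k\le j$, i.e.\ that $k\mapsto(r^k-1)^{1/k}$ is nondecreasing; with $t=r^k$ this amounts to $t\ln t\ge(t-1)\ln(t-1)$ for $t>1$. Granting it, for $j>N$ every factor in $\prod_{k=1}^N(1-u_k^{(j)})$ lies in $[0,1]$, whence $0\le\prod_{k=1}^N(1-u_k^{(j)})\le1$ and $|E_N(r)|\le\sum_{j>N}|a_j|x^j\to0$ as $N\to\infty$, uniformly in $r$, which closes the argument. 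I expect the genuine difficulty to be the bookkeeping of the two limits rather than any single estimate: for fixed $r$ the two one-parity products diverge separately (already the singletons $\mathcal P_f(\{k\},r,x)$ fail to tend to $1$ as $k\to\infty$), so the statement must be read as a genuinely iterated limit --- truncate by $\max\mathcal S\le N$, then $N\to\infty$, then $r\downarrow1$. Under the working hypothesis above one actually obtains the exact identity at every $r>1$, so the outer limit is then a formality; the subtle point is to recover the statement under the bare hypothesis that $f$ is analytic and nonzero on $[0,x]$, keeping all sample points inside the region where the relevant local expansions converge --- which is exactly what driving $r$ down to $1$ accomplishes, since $(r^k-1)^{1/k}\downarrow0$.
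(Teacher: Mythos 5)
Your proposal is correct and reaches the theorem by a genuinely different route. The paper proceeds constructively: it factors $f=\prod_k f_k$ with $f_k=\exp(c_kx^k)$, proves the invariance $\prod_n f_k(x_n(k,r))=f_k(x)$, analyzes the ``cross-contamination'' of $f_j$ sampled on the order-$k$ sequence (limit $1$, $0$, or $\infty$ according to the sign of $j-k$), recovers each $f_k$ by a recursion that divides out the lower components, unwinds that recursion by induction into the odd/even subset pattern, and finally converts multi-indices $(n_j)$ into a single index via composition counting, which is where $\binom{n-1}{|\mathcal S|-1}$ appears. You run this machine in reverse and all at once: the negative-binomial series $\sum_{n\ge m}\binom{n-1}{m-1}z^n=z^m/(1-z)^m$ undoes the composition-counting in one line, giving the closed form $\log\mathcal P_f(\mathcal S,r,x)=\sum_j a_jx^j\prod_{k\in\mathcal S}u_k^{(j)}$, and inclusion--exclusion over $\mathcal S\subseteq\{1,\dots,N\}$ telescopes the whole alternating sum to $\sum_j a_jx^j\bigl[1-\prod_{k\le N}(1-u_k^{(j)})\bigr]$; the paper's invariance lemma becomes the single identity $u_j^{(j)}=1$, and its $j>k$ case becomes your uniform bound $0\le u_k^{(j)}\le 1$ for $k\le j$ (your reduction to $t\ln t\ge(t-1)\ln(t-1)$ is correct). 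What your route buys is substantial: an explicit, quantitative error term $E_N(r)$ with a bound uniform in $r$, a precise reading of the otherwise ill-defined infinite products (truncate by $\max\mathcal S\le N$, then $N\to\infty$ --- you are right that the two one-parity products diverge separately), and the sharper conclusion that the identity holds \emph{exactly for every} $r>1$, so that $r\downarrow1$ is not needed at all under your hypothesis. The one caveat is the hypothesis itself: you assume $\sum_j|a_j|x^j<\infty$, i.e.\ that the Taylor series of $\log f$ at $0$ converges absolutely out to $x$, and your proposed repair by chaining subintervals does not apply here because every sample point of the formula clusters at the origin, not at intermediate points. But this is not a defect relative to the paper: its own proof writes $f(x)=\exp\bigl(\sum_kc_kx^k\bigr)$ and so silently makes the same assumption, while leaving every interchange of limit, sum, and infinite product unjustified. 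Your argument is the more rigorous of the two wherever it applies.
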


\subsection*{Discussion}

\begin{itemize}
\item Equation~\ref{eq:places} defines the geometric sequences  $\{x_n\}_{n=1}^\infty$.  There is a geometric sequence $\{x_n\}$ for each finite subset $\mathcal{S}$ of the natural numbers $\mathbb{N}^*$ (positive integers).  The points $\{x_n\}$ depend upon $x$ and the geometric ratio $r>1$, but also have a coefficient that depends upon the integers in the set $\mathcal{S}$.  For simplicity, we have assumed that the geometric sequences converge to the origin.
\item Equation~\ref{eq:factors} uses the function $f$  to calculate a product $\mathcal{P}_f$ for each set $\mathcal{S}$. \  $\mathcal{P}_f$  is obtained by multiplying values of the function $f$, evaluated on the geometric sequence for the set $\mathcal{S}$.  The index $n$ starts at $|\mathcal{S}|$, not $1$, and each factor of $f$ has a multiplicity $\binom{n-1}{|\mathcal{S}|-1}$. These details will  be explained later.
\item Equation~\ref{eq:powerproduct} calculates the value of the function $f$ at the point $x$ by multiplying and dividing factors of $\mathcal{P}_f$.  There is one factor $\mathcal{P}_f$ in the numerator for each subset $\mathcal{S}$ of $\mathbb{N}^*$ containing an odd number of integers, and one factor $\mathcal{P}_f$ in the denominator for each subset $\mathcal{S}$ of $\mathbb{N}^*$ containing an even number of integers.  After multiplying and dividing factors of $\mathcal{P}_f$, we take the limit as $r$ approaches $1$ from above, $r\downarrow1$.
\end{itemize}

Notice that all points $\{x_n\}$ in all geometric sequences aproach zero as $r\downarrow1$.  We shall prove that, although $f(0)=1$, the formula produces $f(x)$, not 1.

\section{Two Simple Examples}

Before proving our result, we show how to use the Geometric Multiproduct by extrapolating two simple functions.  To make the calculations manageable, we must truncate the infnite products and pick a value for $r$ near $1$.  A truncated Geometric Multiproduct gives us an \emph{estimate} for the value of a function.
\begin{figure}[h!]
\center{
\includegraphics[scale=0.3]{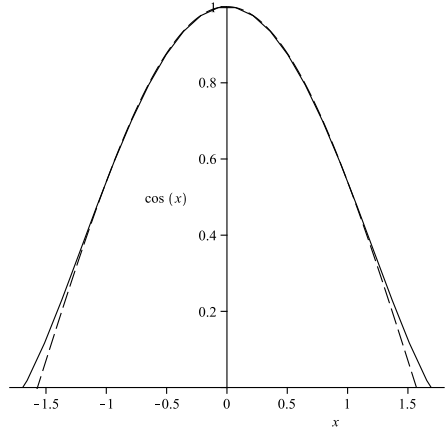}}
\caption{Geometric Multiroducpt for $\cos(x)$.  Solid line is the estimate; dashed line is the function.  The estimate has $\mathcal{S}_{max}=\{2,4\},\, n_{max}=10,$ and $\, r=\sqrt{2}$.}
\label{figexcos}
\end{figure}

\begin{figure}[h!]
\begin{center}
\includegraphics[scale=.3]{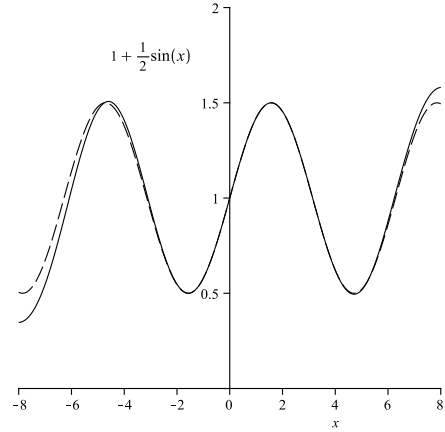}
\caption{Geometric Multiproduct estimate for $1+\frac{1}{2}sin(x)$.  Solid line is the estimate; dashed line is the function.  The estimate has $\mathcal{S}_{max} = \{1,2,3,4\},\, n_{max}=40,$ and  $r=2$.}
\label{halfsin-nonanal}
\end{center}
\end{figure}
We first perform an extrapolation for $\cos(x)$.  For this example, we let the index $n$ run only from $1$ to $n_{max}=10$.
 We truncate the sets $\{\mathcal{S}\}$ at $\mathcal{S}_{max} = \{2,4\}$. This means  we use the power set of $\mathcal{S}_{max}: \, \{2\}, \{4\}$, and $\{2, 4\}$.  (Later we will show why we only need to use even integers for even functions.)   This gives
\begin{align}
\cos(x) &\approx \lim_{r\approx1^+}\,\mathcal{P}_{\cos}\left(\{2\}, r,x\right) \cdot \frac{\mathcal{P}_{\cos}\left(\{4\}, r,x\right)}{\mathcal{P}_{\cos}\left(\{2,4\}, r,x\right)} \nonumber\  \\ 
&\approx \lim_{r\approx1^+}\,\left\{\prod_{n=1}^{10}\cos\left[(r^2-1)^{1/2}\,x/r^n\right]\right\} \ \cdot \frac{\prod_{n=1}^{10}\cos\left[(r^4-1)^{1/4}\,x/r^n\right]}{\prod_{n=2}^{10}\,\left\{\cos\left[(r^2-1)^{1/2}(r^4-1)^{1/4}\,x/r^n\right]\right\}^{\,\,n-1}} \nonumber
\end{align}

      For accuracy, we should truncate the geometric sequences of cosine arguments near their limit point of zero.  This is satisfied if $r^{10}$ is large.  To get $r^{10} = 2^5$, we set $r=\sqrt{2}$.  Graphing both sides, we obtain the result shown in Fig. \ref{figexcos}.  The agreement between the Geometric Multiproduct estimate and $\cos(x)$ is striking, given how drastically we truncated the infinite products and how far $r=\sqrt{2}$ is from 1.

Figure \ref{halfsin-nonanal} shows an extrapolation for the function $1+\frac{1}{2}\sin(x)$.  If we wanted to extrapolate $\sin(x)$, we might use this function instead, to assure that $f(0)=1$ and $f>0$.  The extrapolation has $\mathcal{S}_{max}=\{1,2,3,4\},\ n_{max}=40, \,\textrm{and\ } r=2$.

Increasing $\mathcal{S}_{max}$ would give more oscillations to the estimate for this function.  In this estimate, calculation of $f$ at one point involves multiplication of more than 75,000 factors of earlier values of $f$.  We see again that Geometric Multiproducts are highly tolerant of $r$-values that are quite far from $1$, at least in percentage terms.

\section{Proof of the Geometric Multiproduct}
\begin{proof} We divide the proof of Theorem \ref{thm:GMP} into five short lemmas.
\subsection{An Invariance}
We first define a geometric sequence and a corresponding function that have a very useful property.
\begin{lemma}
Let $\{ x_n \}_{n=1}^\infty$ be the geometric sequence $x_n(k,r)  = (r^k-1)^{1/k}\,x/r^n$, with $r>1$.  Let $f_k = \exp(c_k x^k)$, where $c_k$ is a constant and $k$ is an integer.  Then
\begin{equation}
\prod_{n=1}^\infty f_k(x_n) = f_k(x).
\end{equation}
\end{lemma}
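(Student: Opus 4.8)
The plan is to exploit the special form of $f_k$: since it is an exponential, the infinite product collapses to the exponential of an infinite sum, and that sum is an elementary geometric series. First I would substitute the sequence into the function to get
\begin{equation}
f_k(x_n) \,=\, \exp\!\left(c_k\,x_n^{\,k}\right) \,=\, \exp\!\left(c_k\,(r^k-1)\,\frac{x^k}{r^{nk}}\right).
\end{equation}
Before doing anything else I would note that the construction is legitimate: because $r>1$ we have $r^k-1>0$, so $(r^k-1)^{1/k}$ is a well-defined positive real and every $x_n$ is real.

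Next I would pass to the product. Using $\prod_n \exp(a_n) = \exp\!\left(\sum_n a_n\right)$, valid as soon as the series of exponents converges, I would write
\begin{equation}
\prod_{n=1}^\infty f_k(x_n) \,=\, \exp\!\left(c_k\,(r^k-1)\,x^k\sum_{n=1}^\infty r^{-nk}\right).
\end{equation}
The series $\sum_{n=1}^\infty r^{-nk}$ is geometric with ratio $r^{-k}\in(0,1)$ — this is the one and only convergence point to check, and it is immediate from $r>1$ — so it converges absolutely to $r^{-k}/(1-r^{-k}) = 1/(r^k-1)$, and the infinite product above is genuinely convergent.

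Finally I would substitute this value back in: the factor $r^k-1$ standing in front of the sum cancels exactly against the $1/(r^k-1)$ produced by the series, leaving
\begin{equation}
\prod_{n=1}^\infty f_k(x_n) \,=\, \exp\!\left(c_k\,x^k\right) \,=\, f_k(x),
\end{equation}
which is the claim. I do not expect a genuine obstacle here; the ``invariance'' is simply the observation that the coefficient $(r^k-1)^{1/k}$ in the definition of $x_n$ was engineered precisely so that the geometric sum of the $k$-th powers telescopes against it. The only care needed is to record the convergence of both the product and the series and to point out that $r>1$ is exactly what makes $r^{-k}<1$.
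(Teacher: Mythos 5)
Your proof is correct and follows essentially the same route as the paper's: substitute the sequence, turn the product of exponentials into the exponential of a geometric series, sum it to $1/(r^k-1)$, and cancel against the prefactor. The only difference is that you explicitly record the convergence of the series and product (since $r^{-k}<1$ for $r>1$ and positive integer $k$), which the paper leaves implicit.
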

Here $1/r$ is the geometric ratio, and $x$ is the point at which we want to calculate the value of $f$.
Loosely speaking, the function $f_k(x)$ is invariant under multiplication of function values on the sequence  $x_n(k,r)$.

\begin{proof}
The proof is straightforward algebra:
\begin{align}
\prod_{n=1}^\infty\, f_k\left(\frac{(r^k-1)^{1/k}\, x}{r^n}\right) \nonumber&= \prod_{n=1}^\infty\, \textrm{exp}\left[c_k \left(\frac{(r^k-1)^{1/k}\, x}{r^n}\right)^k\right] \nonumber  
= \textrm{exp}\left[\sum_{n=1}^\infty\, c_kx^k(r^k-1)\left(\frac{1}{r^{nk}}\right)\right] \nonumber \\ 
&= \textrm{exp}\left[c_kx^k(r^k-1)\sum_{n=1}^\infty\, \left(\frac{1}{r^k}\right)^n\right] \nonumber  
= \textrm{exp}\left[c_kx^k(r^k-1)\frac{1/r^k}{1-1/r^k}\right] \nonumber \\ 
&= \textrm{exp}(c_kx^k) = f_k(x). \nonumber
\end{align}
\end{proof}
Since $f_k(x)$  is independent of $r$, we may take the limit $r\downarrow1$ on the left-hand side of the equation.
\begin{equation}
\label{eq:liminvar}
\lim\limits_{r \downarrow 1}\,\,\prod_{n=1}^\infty\, f_k\left(\frac{(r^k-1)^{1/k}\, x}{r^n}\right) = f_k(x).
\end{equation}
The limit $r\downarrow1$ makes the geometric sequence \emph{dense}, because as $r\downarrow1$ all points of the geometric sequence approach the origin:  $\lim\limits_{r\downarrow1}\,(r^k-1)^{1/k}\,x/r^n = 0$.  Eq.~\ref{eq:liminvar} says we can multiply values of $f_k$ evaluated on a geometric sequence within an infinitesimal neighborhood of zero and, although $f_k(0)=1$, we do \emph{not} get 1.  We get the value of $f_k$ at the point $x$.

\subsection{Component Factors of a Function}

We write a normalized function $f$ as a product of $f_k$'s.  If $f\textrm{:\ }  \mathbb{C}\rightarrow\mathbb{C}$ is holomorphic and nonzero in some simply-connected open neighborhood, therein $\log(f)$ is also holomorphic and can be written as a power series \cite{analyticlog}:
\begin{equation}
\label{Taylog}
f(x) = e^{log(f)} = \exp(\sum_{k=1}^\infty c_k\,x^k) = \prod_{k=1}^\infty f_k(x),
\end{equation}
where, as before, $f_k(x)=\exp(c_k x^k)$ with $k$ an integer.  By restriction, the statement is also true for a positive real-analytic function $f$ on open interval of the real axis. 

We call the factor $f_k(x)$ the $k$-th \emph{component} of the function $f$.  For an even function, only even values of the index $k$ would appear.  (This is why only even integers occur in the sets $\{\mathcal{S}\}$ for an even function.)

 It would be convenient if we could produce $f_k$ merely by geometrically sampling $f$ on the order-$k$ sequence.  However, when we sample $f$, we sample not only its $f_k$ component, but also all of the other components of $f$. So we need to understand what happens if we sample the $j$-th component of $f$ on the order-$k$ sequence.
\begin{lemma}  Let $f_j(x)=\exp(c_j\,x^j)$.  Then
\begin{align}
\label{eq:pollution}
\lim_{r \downarrow 1} \prod_{n = 1}^\infty\, f_j \left(\frac{(r^k-1)^{1/k}\, x}{r^n}\right) &= 
\begin{cases}
f_k(x)	&	\textrm{if  $j = k$}\\ 
1 & \textrm{if  $j>k$ \textrm{or} $c_j = 0$} \\
0 & \textrm{if  $j<k$ \textrm{and} $c_j<0$} \\
\infty & \textrm{if $j<k$ \textrm{and} $c_j>0$} 
\end{cases}
\end{align}
\end{lemma}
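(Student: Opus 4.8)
The plan is to mimic the algebraic collapse used in the previous lemma to reduce the infinite product to a single exponential, and then to carry out an asymptotic analysis of the resulting coefficient as $r\downarrow1$.

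First I would repeat the computation that proved the invariance lemma, but now with the component index $j$ decoupled from the sequence order $k$:
\[
\prod_{n=1}^\infty f_j\!\left(\frac{(r^k-1)^{1/k}\,x}{r^n}\right)
=\exp\!\left[\,c_j\,x^j\,(r^k-1)^{j/k}\sum_{n=1}^\infty\frac{1}{r^{nj}}\,\right]
=\exp\!\left[\,c_j\,x^j\,\frac{(r^k-1)^{j/k}}{r^j-1}\,\right].
\]
The geometric series is summable because $r>1$ gives $1/r^j<1$, and moving the sum inside the exponential is justified by absolute convergence. Everything now hinges on the behaviour of $A(r):=(r^k-1)^{j/k}/(r^j-1)$ as $r\downarrow1$.

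Next I would expand near $r=1$. For each positive integer $m$ one has $r^m-1=m(r-1)\bigl(1+O(r-1)\bigr)$, and since $t\mapsto t^{j/k}$ is continuous at $0^+$ (as $j/k>0$), $(r^k-1)^{j/k}=k^{j/k}(r-1)^{j/k}\bigl(1+O(r-1)\bigr)$. Therefore
\[
A(r)=\frac{k^{j/k}}{j}\,(r-1)^{\,j/k-1}\bigl(1+O(r-1)\bigr),
\]
and the four cases read off immediately. If $j=k$ the exponent $j/k-1$ is $0$, so $A(r)\to k^{j/k}/j=1$ and the product tends to $\exp(c_kx^k)=f_k(x)$. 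If $c_j=0$ the product is identically $1$. If $j>k$ then $j/k-1>0$, so $A(r)\to0$ and the product tends to $e^0=1$. If $j<k$ and $c_j\neq0$ then $j/k-1<0$, so $A(r)\to+\infty$; since $x>0$ makes $x^j>0$, the exponent $c_jx^jA(r)$ tends to $+\infty$ when $c_j>0$ and to $-\infty$ when $c_j<0$, giving limits $\infty$ and $0$ respectively. The degenerate case $x=0$ is trivial, as all sample points and $f_j(0)$ equal $1$.

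The arithmetic is routine; the only steps that need genuine care are the interchange of product and exponential, i.e. the convergence of the geometric series, which is exactly why $r>1$ is assumed, and the treatment of the fractional power $j/k$. The fact that $(r^k-1)^{j/k}\to0$ precisely because $j/k>0$ is what produces the clean dichotomy between the $j>k$ regime (the foreign component dies) and the $j<k$ regime (the foreign component explodes); the sign analysis is then just bookkeeping on $c_j$ using $x^j>0$.
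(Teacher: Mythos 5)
Your proposal is correct and follows essentially the same route as the paper: collapse the product to $\exp\bigl[c_j x^j (r^k-1)^{j/k}/(r^j-1)\bigr]$ via the geometric series, then expand near $r=1$ to find the coefficient behaves like $(k^{j/k}/j)(r-1)^{j/k-1}$ and read off the four cases from the sign of $j/k-1$ and of $c_j$. Your explicit remarks on the interchange of product and exponential and on the sign of $x^j$ are slightly more careful than the paper's own argument, but the substance is identical.
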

\begin{proof}
\begin{align*}
\  \prod_{n = 1}^\infty\, f_j \left(\frac{(r^k-1)^{1/k}\, x}{r^n}\right)  &\,=\, \prod_{n=1}^\infty\, \textrm{exp}\left[c_j\left(\frac{(r^k-1)^{1/k}\, x}{r^n}\right)^j\right] \\ 
&\,=\, \textrm{exp}\left[c_jx^j (r^k-1)^{j/k} \sum_{n=1}^\infty\, \left(\frac{1}{r^j}\right)^n\right]  \\
&\,=\, \textrm{exp}\left[c_jx^j\frac{(r^k-1)^{j/k}}{r^j-1}\right] . \nonumber
\end{align*}
Letting $ r = 1+\delta$, using the binomial theorem, keeping lowest order terms in $\delta$, and letting $r \downarrow 1$ (i.e., $\delta \downarrow 0$), we obtain
\begin{align}
\lim_{r \downarrow 1}\,\, \prod_{n = 1}^\infty\, f_j \left(\frac{(r^k-1)^{1/k}\, x}{r^n}\right) &= \lim_{\delta \downarrow 0}\,\, \textrm{exp}\left(c_jx^j\frac{[(1+\delta)^k-1]^{j/k}}{(1+\delta)^j-1)}\right) \nonumber \\ 
&= \lim_{\delta \downarrow 0}\,\,\textrm{exp}\left(c_j\,x^j\, \frac{[1 + k\delta + O(\delta^2) - 1]^{j/k}}{(1 + j\delta + O(\delta^2) - 1)}\right) \nonumber \\ 
&=  \lim_{\delta \downarrow 0}\,\,\textrm{exp}\left(c_j x^j\,\cdot\,\frac{k^{j/k}}{j}\,\cdot\, \lim_{\delta \downarrow 0}\,\delta^{(j/k) - 1}\right) \nonumber
\end{align}
The cases in Eq.~\ref{eq:pollution} follow directly.
\end{proof}

\subsection{A Key Intermediate Result}

According to Eq. \ref{eq:pollution}, taking $r\downarrow1$ (i.e., $\delta \downarrow 0$) eliminates the effect of components $f_j$ with $j>k$ from the order-$k$ geometric sampling of $f$:  they each provide a factor of 1.  The effect of the finitely-many components with $j<k$ can be divided out before the limit is taken.

\begin{lemma}
 Let $f_k(x)=\exp(c_j\,x^k)$.  Then
\begin{equation}
\label{eq:fk}
f_k(x) = \lim_{r\downarrow1}\,\,\prod_{n=1}^\infty  \ \left[f\left(\frac{(r^k-1)^{1/k}\,\, x}{r^n}\right)\,\,\textrm{\LARGE{/}}\,\prod\limits_{i<k}\,f_i\left(\frac{(r^k-1)^{1/k}\,\, x}{r^n}\right)\,\right] \ .
\end{equation}
\end{lemma}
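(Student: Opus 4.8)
The plan is to reduce the statement to the exponential computations already carried out in the two preceding lemmas, the only genuinely new ingredient being a \emph{uniform} control of the contribution of all the components $f_j$ with $j>k$. First I would invoke the factorization $f=\prod_{j\ge1}f_j$ of Eq.~(\ref{Taylog}). Since $a(r):=(r^k-1)^{1/k}\to0$ as $r\downarrow1$, there is a $\rho_0$ inside the radius of convergence of $\log f$ such that, for all $r$ close enough to $1$, every sampling point $x_n=a(r)\,x/r^n$ lies in $[0,\rho_0]$; hence $f(x_n)=\prod_{j\ge1}f_j(x_n)$ there, and dividing by the \emph{finite} product $\prod_{i<k}f_i(x_n)=\exp\big(\sum_{i<k}c_ix_n^i\big)$ leaves exactly $\prod_{j\ge k}f_j(x_n)=\exp\big(\sum_{j\ge k}c_jx_n^j\big)$. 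Multiplying over $n$ and interchanging the two absolutely convergent sums — the geometric sum over $n$ being the one already evaluated in the proof of Eq.~(\ref{eq:pollution}) — turns the bracketed quantity in Eq.~(\ref{eq:fk}) into $\exp\big(\sum_{j\ge k}c_j\,x^j\,a(r)^j/(r^j-1)\big)$.

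Next I would split off the $j=k$ term, which equals $c_kx^k\cdot\frac{r^k-1}{r^k-1}=c_kx^k$ \emph{exactly} for every $r>1$; this is precisely the invariance of the first lemma. It then remains to show $\sum_{j>k}c_jx^j\,a(r)^j/(r^j-1)\to0$ as $r\downarrow1$. Rather than argue term by term (which is the $j>k$ case of Eq.~(\ref{eq:pollution})), I would bound the whole tail at once: for $j\ge k+1$ one has $r^j-1\ge r^{k+1}-1$, and since $a(r)<r$ one has $(a(r)x)^j\le(a(r)x)^{k+1}\rho_0^{\,j-k-1}$, so
\[
\Big|\sum_{j>k}c_jx^j\,\frac{a(r)^j}{r^j-1}\Big|\ \le\ \frac{x^{k+1}a(r)^{k+1}}{r^{k+1}-1}\sum_{j>k}|c_j|\,\rho_0^{\,j-k-1}\ =\ C\cdot\frac{a(r)^{k+1}}{r^{k+1}-1},
\]
with $C<\infty$ because $\rho_0$ is inside the radius of convergence. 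Writing $r=1+\delta$ gives $a(r)^{k+1}\sim(k\delta)^{(k+1)/k}$ and $r^{k+1}-1\sim(k+1)\delta$, so this bound is $\asymp\delta^{1/k}\to0$. Hence the exponent converges to $c_kx^k$, and the bracketed product converges to $\exp(c_kx^k)=f_k(x)$, as claimed.

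The step I expect to be the obstacle is the one that separates this lemma from the two before it: interchanging the limit $r\downarrow1$ with the \emph{infinite} product over the components $f_j$, $j>k$. Individually each such factor tends to $1$, but there are infinitely many of them, so the content of the proof is the uniform, $j$-summable estimate above — and, crucially, checking that it still tends to zero after being divided by $r^{k+1}-1\to0$. Keeping the small quantity $a(r)x$ (rather than $x$) inside the power series, which is legitimate exactly because $r\downarrow1$ drives every sampling point to the origin, is what makes that estimate close.
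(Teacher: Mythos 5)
Your proof is correct, and it rests on the same decomposition as the paper's: isolate the component $j=k$, whose product over $n$ equals $f_k(x)$ exactly for every $r>1$ (the invariance of the first lemma), divide out the finitely many components with $j<k$, and show that the components with $j>k$ contribute nothing in the limit. The difference is in how that last step is executed. The paper's argument is purely formal: it starts from $f_k(x)=\prod_n f_k(x_n(R))$, inserts into each factor the quantity $\bigl[\lim_{r\downarrow1}\prod_{i>k}f_i(x_n(r))\bigr]=1$, and then ``sets $R=r$ and takes the limit of the entire right-hand side'' --- a step that silently interchanges the limit $r\downarrow1$ with the infinite product over $n$ and with the infinite product over the components $i>k$. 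You correctly identify this interchange as the real content of the lemma and supply what the paper omits: passing to the exponent, evaluating the geometric sum over $n$ in closed form, splitting off the exact $j=k$ term, and bounding the entire tail $\sum_{j>k}c_jx^j a(r)^j/(r^j-1)$ by $C\,a(r)^{k+1}/(r^{k+1}-1)\asymp\delta^{1/k}\to0$, uniformly in $j$. This buys genuine rigor --- term-by-term convergence to $1$ of infinitely many factors (the $j>k$ case of Eq.~\ref{eq:pollution}) does not by itself control their product --- at the modest cost of invoking the radius of convergence of $\log f$, which is exactly what Eq.~\ref{Taylog} provides. One cosmetic repair: the inequality $(a(r)x)^j\le(a(r)x)^{k+1}\rho_0^{\,j-k-1}$ requires $a(r)x\le\rho_0$, which follows from $a(r)\to0$ as $r\downarrow1$, not from the stated bound $a(r)<r$.
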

\begin{proof}To express $f_k$ in terms of $f$, we first augment $f_k$ with the other components necessary to construct $f$.
\begin{align}
\label{eq:fkbig}
&f_k(x) = \prod_{n=1}^\infty\,f_k\left(\frac{(R^k-1)^{1/k}\,\,x}{R^n}\right) \nonumber  \\
&= \prod_{n=1}^\infty\,\,\left[\prod\limits_{i<k}\,f_i\left(\frac{(R^k-1)^{1/k}\,\, x}{R^n}\right)\right]\,f_k\left(\frac{(R^k-1)^{1/k}\,\, x}{R^n}\right)\,\left[\lim\limits_{r\downarrow1}\,\,\prod\limits_{i>k}f_i\left(\frac{(r^k-1)^{1/k}\,\, x}{r^n}\right)\right]\,\,\textrm{\LARGE{/}}\prod\limits_{i<k}\,f_i\left(\frac{(R^k-1)^{1/k}\,\, x}{R^n}\right)
\end{align}
This equation is true for any value of R.  The last factor in brackets in the numerator is unity.  If we set $R = r$, we can take the limit of the entire righthand side.  This gives Eq.~\ref{eq:fk}.
\end{proof}
Note that, as empasized in equation~\ref{eq:fkbig}, it is only the higher-order components ($j>k$) that require us to take the limit $r\downarrow1$.  Since we expect the high-order components to be less significant than low-order ones, we might also expect our results to depend only weakly upon taking the limit $r\downarrow1$.  Numerical examples show that we do not need to use an $r$-value \emph{very} close to 1.

\subsection{Calculating the Components of a Function}

We want to express the value of $f$ at $x$ solely in terms of earlier values of $f$ itself.  We can calculate $f(x)$ if we can calculate its components $f_k(x)$.   To calculate the  denominator of $f_k$ in Eq.~\eqref{eq:fk}, we must know how to calculate $f_i$ for all $i<k$.  But where does $f_i$ come from?  Answer:  the formula for $f_k$ can be used recursively to calculate its own denominator.

The first component has no denominator and can be calculated solely from $f$:
\begin{equation}
f_1(x) = \lim_{r\downarrow1}\ \ \prod_{n_1=1}^\infty f\left[\frac{(r-1)\,x}{r^{n_1}}\right]. \nonumber
\end{equation}
  The second component $f_2$ will have $f$ in its numerator and $f_1$ in its denominator, but $f_1$ can be expressed in terms of $f$:

\begin{align*}
f_2(x) &= \lim_{r\downarrow1}\ \ \prod_{n_2=1}^\infty \,\,f\left[\frac{\sqrt{r^2-1}\,\, x}{r^{n_2}}\right]\,\,\,\,\textrm{\LARGE{/}}f_1\left[\frac{\sqrt{r^2-1}\,\, x}{r^{n_2}}\right] \\
&= \lim_{r\downarrow1}\ \prod_{n_1,n_2=1}^\infty f\left[\frac{\sqrt{r^2-1}\,\, x}{r^{n_2}}\right]\,\,\textrm{\LARGE{/}}f\left[\frac{(r-1)\sqrt{r^2-1}\,\, x}{r^{n_1+n_2}}\right]. 
\end{align*}

 The number of factors rapidly increases:  $f_3$ will have $f_2$ and $f_1$ in its denominator, but the $f_2$ in the denominator will spawn an $f_1$ in the denominator of the denominator (i.e., in the numerator).  After expressing each $f_1$ in terms of $f$,
\begin{align*}
f_3(x) = \lim_{r\downarrow1}\ \ \prod_{n_1,n_2,n_3=1}^\infty \ f\left[\frac{(r^3-1)^{1/3}\,x}{r^{n_3}}\right]&\ f\left[\frac{(r-1)(r^2-1)^{1/2}(r^3-1)^{1/3}\,x}{r^{n_1+n_2+n_3}}\right] \\
&\,\,\textrm{\LARGE{/}}\ f\left[\frac{(r-1)(r^3-1)^{1/3}\,x}{r^{n_1+n_3}}\right]\ f\left[\frac{(r^2-1)^{1/2}(r^3-1)^{1/3}\,x}{r^{n_2+n_3}}\right]\,.
\end{align*}

There is a shortcut notation that can help us cope with the complexity.   The third component can be expressed as
\begin{equation}
f_3(x) = \lim_{r\downarrow1}\,\prod\,\frac{\,\,f(\{3\} \circ x)\,f(\{1,2,3\} \circ x)}{f(\{1,3\} \circ x)\,f(\{2,3\} \circ x)}, \nonumber
\end{equation}
in which $\{\ldots,\,\, j\,, \ldots\} \circ x$ indicates the presence of  a factor $(r^j-1)^{1/j}/r^{n_j}$ multiplying $x$, and in the product $\prod$ all indices $n_j$ run from 1 to $\infty$.

As $k$ gets larger, we begin to see a pattern.
\begin{itemize}
\item  In the numerator of $f_k$ we have all sets of positive integers whose greatest element is $k$ and have an \emph{odd} number of elements.
\item  In the denominator we have all sets of positive integers whose greatest element is $k$ and have an \emph{even} number of elements.
\end{itemize}
We prove that this pattern always holds.
\begin{lemma}
Let the index set $\{\mathcal{S}_k\}$ be the family of sets of positive integers, each of whose largest element is $k$:  $\,\mathcal{S}_k \subset \mathbb{N}^*\, \textrm{and} \,\max(\mathcal{S}_k) = k$.  Then, with the previous assumptions and notation,
\begin{equation}
\label{eq:fkcirc}
f_k(x) =  \lim_{r\downarrow1}\,\prod\limits_{|\mathcal{S}_k|\ \textrm{odd}}\,\,\,\prod\limits_{\stackrel{\textrm{\small{$\ldots,\,n_j,\ldots=\, 1\, $}}}{j \in \mathcal{S}_k}}^\infty\,f\left(\mathcal{S}_k \circ x\right)\,\,\textrm{\LARGE{/}}\prod\limits_{|\mathcal{S}_k|\ \textrm{even}}\,\,\,\prod\limits_{\stackrel{\textrm{\small{$\ldots,\,n_j ,\ldots=\, 1\,$}}}{j \in \mathcal{S}_k}}^\infty\,f\left(\mathcal{S}_k \circ x\right)\,.
\end{equation}
\end{lemma}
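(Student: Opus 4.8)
The plan is to establish Eq.~\eqref{eq:fkcirc} by strong induction on $k$, feeding the preceding lemma (Eq.~\eqref{eq:fk}) into itself and keeping track of which sets land in the numerator and which in the denominator. For the base case $k=1$, the family $\{\mathcal{S}_1\}$ consists of the single set $\{1\}$, which has odd cardinality and hence sits in the numerator, so Eq.~\eqref{eq:fkcirc} reduces to $f_1(x)=\lim_{r\downarrow1}\prod_{n_1=1}^{\infty}f(\{1\}\circ x)$ --- exactly Eq.~\eqref{eq:fk} with an empty denominator.

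For the inductive step, assume Eq.~\eqref{eq:fkcirc} holds for every $i<k$. In Eq.~\eqref{eq:fk}, $f_k(x)$ is a limit of a product over $n_k$ of the single factor $f(\{k\}\circ x)$ divided by $\prod_{i<k}f_i(\{k\}\circ x)$. Into each factor $f_i(\{k\}\circ x)$ substitute the inductive formula for $f_i$, evaluated at the point $y=(r^k-1)^{1/k}x/r^{n_k}$; since $\max(\mathcal{S}_i)=i<k$ forces $k\notin\mathcal{S}_i$, we have $\mathcal{S}_i\circ y=(\mathcal{S}_i\cup\{k\})\circ x$, so $f_i(\{k\}\circ x)$ turns into a ratio of products of factors $f\big((\mathcal{S}_i\cup\{k\})\circ x\big)$ with the new index $n_k$ running alongside the old indices $n_j$, $j\in\mathcal{S}_i$. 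What remains is bookkeeping: the map $\mathcal{S}\mapsto\mathcal{S}\setminus\{k\}$ is a bijection of $\{\mathcal{S}_k\}\setminus\bigl\{\{k\}\bigr\}$ onto the disjoint union $\bigsqcup_{i<k}\{\mathcal{S}_i\}$ (disjoint because those families have distinct maxima), the lone leftover set $\{k\}$ accounting for the surviving $f(\{k\}\circ x)$ in the numerator. Dividing by $f_i$ exchanges its numerator and denominator, while adjoining $k$ raises $|\mathcal{S}_i|$ by one, so the rule ``odd cardinality in the numerator, even cardinality in the denominator'' is reproduced, and every set of positive integers with largest element $k$ appears exactly once, with the correct sign. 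This yields Eq.~\eqref{eq:fkcirc} for $k$.

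The step I expect to require the most care is analytic rather than combinatorial: Eq.~\eqref{eq:fk} is an identity only \emph{after} the limit, so iterating it produces nested limits, infinite products of infinite products, and an interchange with the Taylor series of $f$, all of which need justification. I would sidestep the iteration by evaluating the right-hand side of Eq.~\eqref{eq:fkcirc} directly. Writing $f=\exp(\sum_{m\ge1}c_m x^m)$ and summing the resulting geometric series as in the invariance lemma, the product attached to a set $\mathcal{S}$ equals $\exp\big(\sum_m c_m x^m\prod_{j\in\mathcal{S}}u_j(m)\big)$ with $u_j(m)=(r^j-1)^{m/j}/(r^m-1)$, so the logarithm of the quotient in Eq.~\eqref{eq:fkcirc} is $\sum_m c_m x^m\sum_{\max\mathcal{S}=k}(-1)^{|\mathcal{S}|+1}\prod_{j\in\mathcal{S}}u_j(m)=\sum_m c_m x^m\,u_k(m)\prod_{j=1}^{k-1}\big(1-u_j(m)\big)$. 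Since $u_m(m)\equiv1$ for every $r>1$, when $m<k$ the product contains the identically vanishing factor $1-u_m(m)$ and the coefficient of $c_m x^m$ is identically zero; when $m=k$ that coefficient equals $1\cdot\prod_{j<k}(1-u_j(k))$, which tends to $1$ because each $u_j(k)\to0$ as $r\downarrow1$; and when $m>k$ it equals $u_k(m)\prod_{j<k}(1-u_j(m))\to0$ since $u_k(m)\to0$. Hence the coefficient of $c_m x^m$ tends to $\delta_{mk}$, and a dominated-convergence estimate on the tail of the sum --- available for $r$ close to $1$, where every sampling coefficient $\prod_{j\in\mathcal{S}}(r^j-1)^{1/j}$ is less than $1$ (so all sampled points stay in $[0,x]$) and every product converges with ratio $r^{-m}<1$ --- allows passing $\lim_{r\downarrow1}$ inside the sum, leaving $\exp(c_k x^k)=f_k(x)$, as required.
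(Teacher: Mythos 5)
Your proposal is correct, and the argument you actually commit to is genuinely different from the paper's. The paper proves this lemma exactly as in your first two paragraphs: strong induction on $k$, substituting the inductive expression for each $f_i$ into the denominator of Eq.~\eqref{eq:fk}, with the parity bookkeeping handled by the observation that division swaps numerator and denominator while adjoining $k$ raises $|\mathcal{S}_i|$ by one, so that $\{\mathcal{S}\mapsto\mathcal{S}\setminus\{k\}\}$ bijects $\{\mathcal{S}_k\}\setminus\{\{k\}\}$ onto $\bigcup_{i<k}\{\mathcal{S}_i\}$. The paper does not address the analytic issue you flag --- that Eq.~\eqref{eq:fk} is an identity only after $r\downarrow1$, so substituting it inside a further infinite product over $n_k$ and collapsing the nested limits into one requires justification (the paper's device of introducing $R$, inserting a factor of unity, and then ``setting $R=r$'' is where this is swept under the rug). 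Your direct computation closes that gap: writing $\log f=\sum_m c_m x^m$ and summing the geometric series turns the logarithm of the right-hand side of Eq.~\eqref{eq:fkcirc} into $\sum_m c_m x^m\,u_k(m)\prod_{j<k}\bigl(1-u_j(m)\bigr)$ with $u_j(m)=(r^j-1)^{m/j}/(r^m-1)$; the identity $u_m(m)\equiv1$ kills the terms $m<k$ exactly (for every $r$, consistent with the paper's remark that only the high-order components need the limit), the $m=k$ coefficient tends to $1$, the $m>k$ coefficients tend to $0$, and the uniform bound $0\le u_j(m)\le1$ for $r^j\le 2$ and $m\ge j$ dominates the tail so the limit passes inside the sum. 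This buys a single, fully justified limit in place of an iterated recursion, at the cost of leaning directly on the expansion $f=\exp(\sum c_m x^m)$ rather than only on the previous lemma; both routes deliver the same combinatorial identity $\sum_{\max\mathcal{S}=k}(-1)^{|\mathcal{S}|+1}\prod_{j\in\mathcal{S}}u_j = u_k\prod_{j<k}(1-u_j)$, which the paper encodes set-theoretically and you encode as an inclusion--exclusion over subsets of $\{1,\dots,k-1\}$.
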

\begin{proof}
The proof uses induction on $k$.  We have shown that the assertion holds for small $k$.  We show that if it holds for all $i<k$, it also holds for $k$.  Eq.~\ref{eq:fk} can be written
\begin{align}
f_k(x) &= \lim_{r \downarrow 1}\,\prod\limits_{n_k = 1}^\infty\,\frac{ f\left(\{k\} \circ x\right)}{\prod\limits_{i<k}\, f_i\left(\{k\} \circ x\right)} \nonumber\\
&=  \lim_{r \downarrow 1}\,\prod\limits_{n_k = 1}^\infty f\left(\{k\} \circ x\right) \nonumber \ \cdot \frac{\prod\limits_{i<k}\,\,\prod\limits_{|\mathcal{S}_i|\ \textrm{even}}\,\,\prod\limits_{n_k=1}^\infty\,\prod\limits_{\stackrel{\textrm{\small{$\ldots,\,n_j ,\ldots=\, 1\,$}}}{j \in \mathcal{S}_i}}^\infty\,f \left(\left[\mathcal{S}_i \cup \{k\}\right] \circ x\right)}{\prod\limits_{i<k}\,\,\prod\limits_{|\mathcal{S}_i|\ \textrm{odd}}\,\,\prod\limits_{n_k=1}^\infty\,\prod\limits_{\stackrel{\textrm{\small{$\ldots,\,n_j ,\ldots=\, 1\,$}}}{j \in \mathcal{S}_i}}^\infty\,f \left(\left[\mathcal{S}_i \cup \{k\}\right] \circ x\right)}, \nonumber
\end{align}
where we have used Eq.~\ref{eq:fkcirc} to express $f_i$, for all $i<k$.  In the denominator, after unioning each odd cardinality set $\mathcal{S}_i\,, i<k$ with the set $\{k\}$,
we have all of the even cardinality sets in $\{\mathcal{S}_k\}$ in the denominator.  Similarly, we end up with  all of the odd cardinality sets in $\{\mathcal{S}_k\}$ in the numerator, including the singleton set $\{k\}$, which appears in the first factor.  This gives Eq.~\ref{eq:fkcirc}, completing the induction.
\end{proof}

\subsection{Simplification:  Consolidating Exponents}

We can simplify  Eq.~\ref{eq:fkcirc} by consolidating all indices $n_j, j \in \mathcal{S}_k$, into a single index $n$.  
\begin{lemma}
Let $\mathcal{S}_k \subset \mathbb{N^*} \,$with$\,\, \max(\mathcal{S}_k)=k$, where $k$ is a positive integer.  With the previous assumptions and notation, each component $f_k(x)$ of $f$ is given by
\begin{equation}
\label{eq:fkpsk}
f_k(x) = \lim\limits_{r \downarrow 1}\,\frac{\prod\limits_{|\mathcal{S}_k|\ \textrm{odd}}\,\mathcal{P}_f(\mathcal{S}_k, r, x)}{\prod\limits_{|\mathcal{S}_k|\ \textrm{even}}^{\ }\,\mathcal{P}_f(\mathcal{S}_k, r, x)},
\end{equation}
where $\mathcal{P}_f(\mathcal{S}_k, r, x)$ is defined by Equations~\ref{eq:factors}~and~\ref{eq:places}.
\end{lemma}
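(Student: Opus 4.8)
The plan is to prove the identity set‑subset by set‑subset: for each fixed index set $\mathcal{S}_k$ appearing in Eq.~\ref{eq:fkcirc} I will show that the iterated infinite product over the tuple $(n_j)_{j\in\mathcal{S}_k}$ collapses to the single‑index product $\mathcal{P}_f(\mathcal{S}_k,r,x)$ of Eq.~\ref{eq:factors}. Since the outer products over the family $\{\mathcal{S}_k\}$ in Eq.~\ref{eq:fkcirc} and in the target Eq.~\ref{eq:fkpsk} range over exactly the same finite collection --- the $2^{k-1}$ subsets of $\{1,\dots,k\}$ that contain $k$, sorted by the parity of their cardinality --- equality of the inner products for every $\mathcal{S}_k$ yields the lemma at once, and the outer limit $r\downarrow1$ carries over unchanged.

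First I would unpack the notation $\mathcal{S}_k\circ x$: the argument fed to $f$ is $\bigl[\prod_{j\in\mathcal{S}_k}(r^j-1)^{1/j}\bigr]\,x/r^{\,s}$ with $s=\sum_{j\in\mathcal{S}_k}n_j$, so it depends on the tuple $(n_j)_{j\in\mathcal{S}_k}$ only through the single integer $s$, and in fact equals $x_s(\mathcal{S}_k,r,x)$ in the notation of Eq.~\ref{eq:places}. Writing $m=|\mathcal{S}_k|$, I would then group the factors of the product by the value of $s$. The number of tuples $(n_j)_{j\in\mathcal{S}_k}\in(\mathbb{N}^*)^{m}$ with $\sum_j n_j=n$ is the number of compositions of $n$ into $m$ positive parts, which by a stars‑and‑bars count equals $\binom{n-1}{m-1}$, and such tuples exist precisely for $n\ge m$. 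Hence the inner product in Eq.~\ref{eq:fkcirc} equals $\prod_{n=m}^{\infty} f(x_n(\mathcal{S}_k,r,x))^{\binom{n-1}{m-1}}$, which is exactly the definition of $\mathcal{P}_f(\mathcal{S}_k,r,x)$ in Eqs.~\ref{eq:factors}--\ref{eq:places}; substituting this back into Eq.~\ref{eq:fkcirc} produces Eq.~\ref{eq:fkpsk}.

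The one point demanding care --- and the main obstacle --- is the legitimacy of regrouping the factors of an infinite iterated product, which is valid only under absolute convergence. I would supply it as follows. Fix $r>1$ small enough that $C:=\prod_{j\in\mathcal{S}_k}(r^j-1)^{1/j}<1$ (each factor $(r^j-1)^{1/j}\to0$ as $r\downarrow1$, so this holds on a right neighborhood of $1$, which is all that is needed before the limit is taken); then every argument $Cx/r^{\,s}$ lies in $[0,x]$, where $f$ is analytic and nonzero --- hence positive, since $f(0)=1$ --- so $\log f$ is well defined and, because $\log f(0)=0$, satisfies $|\log f(t)|\le A\,|t|$ for $0\le t\le x$ with $A=A(f,x)$ a constant. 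Therefore
\[
\sum_{s\ge m}\#\{\text{tuples with sum }s\}\cdot\bigl|\log f(Cx/r^{s})\bigr|\ \le\ A\,|x|\sum_{s\ge m}\binom{s-1}{m-1}\,r^{-s}\ <\ \infty,
\]
since $\binom{s-1}{m-1}\le s^{m-1}$ grows only polynomially while $r^{-s}$ decays geometrically. This absolute convergence of $\sum\log f$ simultaneously shows that the products in Eqs.~\ref{eq:fkcirc} and~\ref{eq:fkpsk} converge and licenses the term‑by‑term regrouping used above; the finitely many outer factors cause no trouble, and the limit $r\downarrow1$ is taken exactly as in Eq.~\ref{eq:fkcirc}. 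I do not expect further complications: the combinatorial identity is classical, and the analytic estimate is routine once the arguments have been confined to $[0,x]$.
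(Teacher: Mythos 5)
Your proposal follows essentially the same route as the paper: rewrite the iterated product over the tuple $(n_j)_{j\in\mathcal{S}_k}$, observe that the argument of $f$ depends only on $s=\sum_j n_j$, and group factors by $s$ using the composition count $\binom{n-1}{|\mathcal{S}_k|-1}$ (starting at $n=|\mathcal{S}_k|$), which is exactly the paper's argument. Your additional absolute-convergence estimate via $\sum|\log f|$ to license the regrouping is a point of rigor the paper passes over silently, but it does not change the structure of the proof.
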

\begin{proof}
If we express Eq.~\ref{eq:fkcirc} explicitly,
\begin{equation}
\label{eq:fkmessy}
f_k(x) = \lim_{r\downarrow1}\,\frac{\prod\limits_{|\mathcal{S}_k|\ \textrm{odd}}\,\,\,\prod\limits_{\ldots,\,n_j,\ldots=\, 1\, }^\infty\,f\left(\left[\,\prod_{j\,\in\,\mathcal{S}_k}\,\frac{(r^j-1)^{1/j}}{r^{n_j}}\right]\,x\right)}{\prod\limits_{|\mathcal{S}_k|\ \textrm{even}}\,\,\,\prod\limits_{\ldots,\,n_j ,\ldots=\, 1\,}^\infty\,f\left(\left[\,\prod_{j\,\in\,\mathcal{S}_k}\,\frac{(r^j-1)^{1/j}}{r^{n_j}}\right]\,x\right)}.
\end{equation}
In both the numerator and denominator of $f_k(x)$, there is a product
\begin{align*}
&\prod_{\ldots,\,n_j,\,\ldots\, =\, 1}^\infty\,f\left(\,\left[\,\prod_{j\,\in\,\mathcal{S}_k}\,\frac{(r^j-1)^{1/j}}{r^{n_j}}\right]\,x\right)\,\,\,\,\,  = \prod_{\ldots,\,n_j,\,\ldots\, =\, 1}^\infty\,f\left(\,\left[\,\,\prod_{j\,\in\,\mathcal{S}_k}(r^j-1)^{1/j}\right]\,\cdot\,\frac{x}{{r^{\,\,\,\sum\limits_{\textrm{\scriptsize{$j\,\in\,\mathcal{S}_k$}}}\textrm{\footnotesize{$n_j$}}}}}\,\right)  \\
=\,&\,\,\,\,\prod_{n=|\mathcal{S}_k|}^\infty\,\left[\,f\left(\,\left[\prod_{j\in\mathcal{S}_k}\,(r^j-1)^{1/j}\right]\frac{x}{r^n}\right)\,\right]^{\,\,\textrm{\large{$\binom{n-1}{|\mathcal{S}_k|-1}$}}}\,\,\,=\,\,\,\,\,\prod_{n=|\mathcal{S}_k|}^\infty\,\,\mathcal{P}_f\left(\mathcal{S}_k, r, x\right)\,.
\end{align*}
The binomial coefficient $\binom{n-1}{|\mathcal{S}_k|-1}$ is the number of ways in which the $|\mathcal{S}_k|$ indices $\{\ldots,\,n_j,\,\ldots |\, j \in \mathcal{S}_k\}$, can add up to $n$ (Ref. \cite{composition}).  The index $n = \sum n_j$ starts at $|\mathcal{S}_k|$ because each $n_j$ is at least 1, and there are  $|\mathcal{S}_k|$ of them.  Substituting this result in both numerator and denominator of Eq.~\ref{eq:fkmessy} gives Eq.~\ref{eq:fkpsk}.
\end{proof}
To obtain $f(x)$, we multiply all $f_k(x)$ for $k\in\mathbb{N}^*$.  This effectively removes the restriction $\max(\mathcal{S}_k) = k$ in Eq.~\ref{eq:fkpsk}, giving us the Geometric Multiproduct for $f(x)$, Eq.~\ref{eq:powerproduct}.  This ends the proof of Theorem \ref{thm:GMP}.
\end{proof}
Mathematically, the Geometric Multiproduct does what Taylor series do.  It uses information about a function from an infinitesimally small neighborhood of one point to calculate the value of the function at another point.

\subsection{Practical Applications}

In engineering applications, to \emph{sample a signal} means to measure its magnitude periodically, e.g., at moments that constitute an arithmetic sequence in time, or at uniformly separated places.  The signal is often a voltage that represents a physical quantity, such as the pressure of a sound wave or the intensity of light at one point.  Periodic sampling of a signal is widely used in many modern inventions, such as digital telephone networks and digital cameras.  The samples are quantized and digitized (rounded to an integer and converted to a binary number) to facilitate computation and transmission.  Subsequently, the digitized samples can be converted back to an analog signal, e.g., speech or image.  In finance, recording the daily closing price of a stock is another example of sampling.

     Sampling has an important advantage over analytic methods.  If we can do all of our calculations with the digitized samples alone, we never need to determine the analytic form of the signal.  Ref.~\cite{dsp} describes how to process a signal by performing calculations with digitized arithmetic samples, for example, to remove unwanted frequency components in a digitized speech signal.

     The Geometric Multiproduct shares the same advantage.   Even if we don't know the analytic form of a signal, we can still make predictions of its future value, by using geometric samples:  measurements of the signal at points in time that form geometric sequences.  For this reason, it could potentially have many applications in engineering, science, finance and any other field in which forecasting plays an important role.











\end{document}